\newtheorem{theorem}{Theorem}
\newtheorem{lemma}{Lemma}
\newcommand{\N}{\mathbb N}
\newcommand{\Z}{\mathbb Z}
\newcommand{\Q}{\mathbb Q}
\title{Hilbert's 10th Problem for solutions in a subring of $\Q$}
\author{Agnieszka Peszek, Apoloniusz Tyszka}
\begin{document}
\date{}
\maketitle
\begin{sloppypar}
\begin{abstract}
Yuri Matiyasevich's theorem states that the set of all Diophantine equations which
have a solution in \mbox{non-negative} integers is not recursive.
Craig Smory\'nski's theorem states that the set of all Diophantine equations which
have at most finitely many solutions in \mbox{non-negative} integers is not recursively enumerable.
Let $R$ be a subring of~$\Q$ with or without~$1$. By \mbox{$H_{10}(R)$}, we denote
the problem of whether there exists an algorithm which for any given Diophantine equation with integer coefficients,
can decide whether or not the equation has a solution in $R$.
We prove that a positive solution to \mbox{$H_{10}(R)$} implies that the set of all Diophantine equations
with a finite number of solutions in $R$ is recursively enumerable. We show the converse implication
for every infinite set \mbox{$R \subseteq \Q$} such that there exist computable functions
\mbox{$\tau_1,\tau_2 \colon \N \to \Z$} which satisfy \mbox{$(\forall n \in \N$} $\tau_2(n) \neq 0) \wedge
\left(\left\{\frac{\textstyle \tau_1(n)}{\textstyle \tau_2(n)}:~n \in \N\right\}=R\right)$.
This implication for \mbox{$R=\N$} guarantees that Smory\'nski's theorem follows from Matiyasevich's theorem.
Harvey Friedman conjectures that the set of all polynomials of several variables with integer
coefficients that have a rational solution is not recursive.
Harvey Friedman conjectures that the set of all polynomials of several variables with integer coefficients
that have only finitely many rational solutions is not recursively enumerable.
These conjectures are equivalent by our results for \mbox{$R=\Q$.}
\end{abstract}
\vskip 0.1truecm
{\bf 2010 Mathematics Subject Classification:} 03D25, 11U05.
\vskip 0.2truecm
\noindent
{\bf Key words and phrases:} Craig Smory\'nski's theorem, Diophantine equation which has at most finitely many solutions,
Hilbert's 10th Problem for solutions in a subring of $\Q$, Martin Davis' theorem, recursive set, recursively enumerable set,
Yuri Matiyasevich's theorem.
\section{Introduction and basic lemmas}
Yuri Matiyasevich's theorem states that the set of all Diophantine equations which
have a solution in \mbox{non-negative} integers is not recursive, see \cite{Matiyasevich}.
Martin Davis' theorem states that the set of all Diophantine equations which have at most finitely many solutions
in positive integers is not recursive, see \cite{Davis}. Craig Smory\'nski's theorem states that
the set of all Diophantine equations which have at most finitely many solutions in \mbox{non-negative} integers is not
recursively enumerable, see \mbox{\cite[p.~104,~Corollary~1]{Smorynski1}} and \mbox{\cite[p.~240]{Smorynski2}}.
\vskip 0.2truecm
\par
Let ${\cal P}$ denote the set of prime numbers, and let
\[
{\cal P}=\{p_1,q_1,r_1,p_2,q_2,r_2,p_3,q_3,r_3,\ldots\},
\]
where \mbox{$p_1<q_1<r_1<p_2<q_2<r_2<p_3<q_3<r_3<\ldots$} 
\begin{lemma}\label{lem0}
For a \mbox{non-negative} integer $x$, let
$\prod_{i=1}^\infty\limits p_i^{\textstyle \alpha_i} \cdot q_i^{\textstyle \beta_i} \cdot r_i^{\textstyle \gamma_i}$
be the prime decomposition of \mbox{$x+1$}. For every positive integer $n$, the mapping which sends \mbox{$x \in \N$} to
\[
\left((-1)^{\textstyle \alpha_1} \cdot \frac{\textstyle \beta_1}{\textstyle \gamma_1+1},\ldots,(-1)^{\textstyle \alpha_n} \cdot \frac{\textstyle \beta_n}{\textstyle \gamma_n+1}\right) \in {\Q}^n
\]
is a computable surjection from $\N$ onto \mbox{${\Q}^n$}.
\end{lemma}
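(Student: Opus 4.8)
The plan is to verify the two required properties separately: that the displayed map is computable, and that it is a surjection onto $\Q^n$.

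For computability I would argue directly. From an input $x \in \N$ one effectively forms $x+1 \geq 1$ and factors it into primes, since integer factorization is an algorithmic procedure; from the factorization one reads off the exponents $\alpha_i,\beta_i,\gamma_i$ attached to $p_i,q_i,r_i$ for $i=1,\ldots,n$. Each coordinate $(-1)^{\alpha_i} \cdot \frac{\beta_i}{\gamma_i+1}$ is then a finite arithmetic expression in these exponents, so the whole $n$-tuple is produced by an algorithm, and the map is computable.

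For surjectivity the key observation is that within the $i$-th block the three primes $p_i,q_i,r_i$ independently encode a sign, a numerator, and a denominator. Fix a target $(s_1,\ldots,s_n) \in \Q^n$. For each $i$ I would choose $\alpha_i \in \{0,1\}$ so that $(-1)^{\alpha_i}$ matches the sign of $s_i$, taking $\alpha_i=0$ whenever $s_i \geq 0$, and write $|s_i| = \frac{\beta_i}{\gamma_i+1}$ with $\beta_i,\gamma_i \in \N$. This is always possible: as $\gamma_i$ ranges over $\N$ the denominator $\gamma_i+1$ ranges over all positive integers and $\beta_i$ ranges over all of $\N$, so every non-negative rational is attained, the case $s_i=0$ being covered by $\beta_i=0$. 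Setting
\[
x = \left(\prod_{i=1}^n p_i^{\textstyle \alpha_i} \cdot q_i^{\textstyle \beta_i} \cdot r_i^{\textstyle \gamma_i}\right) - 1,
\]
one obtains a non-negative integer, and by uniqueness of prime factorization the exponents of $p_i,q_i,r_i$ in $x+1$ are exactly $\alpha_i,\beta_i,\gamma_i$ for $i \le n$, with the primes of index greater than $n$ absent. Hence this $x$ is sent to $(s_1,\ldots,s_n)$, which establishes that the map is onto $\Q^n$.

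I do not expect any serious obstacle here; the argument is essentially a coding construction. The only points demanding care are the encoding of the sign through the parity of $\alpha_i$, so that restricting to $\alpha_i \in \{0,1\}$ already suffices, the separate treatment of $s_i=0$, and the remark that $\frac{\beta_i}{\gamma_i+1}$ exhausts the non-negative rationals; once these are in place, unique factorization does the rest.
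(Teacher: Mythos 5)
Your proof is correct: the coding construction via choosing $\alpha_i\in\{0,1\}$ for the sign, $\beta_i$ for the numerator, and $\gamma_i+1$ for the denominator, followed by unique factorization, is exactly the intended argument. The paper states Lemma~\ref{lem0} without proof, evidently regarding it as routine, and your write-up supplies precisely the routine verification that is being taken for granted.
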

\vskip 0.2truecm
\par
Let \mbox{$s_n \colon \N \to {\Q}^n$} denote the surjection defined in Lemma~\ref{lem0}.
\noindent
\begin{lemma}\label{lem1}
For every infinite set \mbox{$R \subseteq \Q$},
a Diophantine equation \mbox{$D(x_1,\ldots,x_n)=0$} has no solutions
in \mbox{$x_1,\ldots,x_n \in R$} if and only if the equation \mbox{$D(x_1,\ldots,x_n)+0 \cdot x_{n+1}=0$} has at most
finitely many solutions in \mbox{$x_1,\ldots,x_{n+1} \in R$}.
\end{lemma}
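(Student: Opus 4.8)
The plan is to exploit the fact that the added term $0 \cdot x_{n+1}$ vanishes identically, so that the polynomial $D(x_1,\ldots,x_n)+0 \cdot x_{n+1}$ takes exactly the same values as $D(x_1,\ldots,x_n)$ while introducing a free variable $x_{n+1}$ ranging over all of $R$. The first observation I would record is that a tuple $(x_1,\ldots,x_{n+1}) \in R^{n+1}$ solves $D(x_1,\ldots,x_n)+0 \cdot x_{n+1}=0$ if and only if $(x_1,\ldots,x_n) \in R^n$ solves $D(x_1,\ldots,x_n)=0$, with $x_{n+1}$ otherwise unconstrained. Both directions of the equivalence then follow by counting the solutions of the augmented equation.

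For the forward implication I would argue directly. If $D(x_1,\ldots,x_n)=0$ has no solution in $R$, then by the observation above no tuple of the first $n$ coordinates can be completed to a solution of the augmented equation, so the augmented equation has zero solutions in $R$. In particular it has at most finitely many, which is what is required.

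For the converse I would argue by contraposition. Suppose $D(x_1,\ldots,x_n)=0$ admits a solution $(a_1,\ldots,a_n) \in R^n$. Since $0 \cdot b = 0$ for every $b \in R$, each tuple $(a_1,\ldots,a_n,b)$ with $b \in R$ solves $D(x_1,\ldots,x_n)+0 \cdot x_{n+1}=0$. Because $R$ is infinite there are infinitely many admissible values of $b$, and distinct choices of $b$ yield distinct $(n+1)$-tuples; hence the augmented equation has infinitely many solutions in $R$, so it does \emph{not} have at most finitely many. This establishes the contrapositive of the ``if'' direction.

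The argument is entirely elementary, and the only hypothesis that does any work is the infiniteness of $R$, invoked in the converse: it is precisely what guarantees that a single solution of $D=0$ generates infinitely many solutions of the augmented equation. Accordingly, I do not expect any genuine obstacle beyond stating the free-variable observation carefully; the content of the lemma is the bookkeeping device that converts a nonexistence statement about $D=0$ into a finiteness statement about $D+0 \cdot x_{n+1}=0$, a form better suited to the recursive-enumerability arguments to follow.
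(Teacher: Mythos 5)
Your proof is correct and is exactly the intended argument; the paper states Lemma~\ref{lem1} without proof precisely because this elementary free-variable observation (augmented solutions restrict to solutions of $D=0$, and any single solution of $D=0$ yields infinitely many augmented solutions since $R$ is infinite) is the canonical one.
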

\par
Let $R$ be a subring of~$\Q$ with or without $1$. By \mbox{$H_{10}(R)$}, we denote
the problem of whether there exists an algorithm which for any given Diophantine equation with integer coefficients,
can decide whether or not the equation has a solution in $R$.
\section{A positive solution to \mbox{$H_{10}(R)$} implies that the set of all
Diophantine equations with a finite number of solutions in $R$ is recursively enumerable}
In the next three lemmas we assume that \mbox{$\{0\} \subsetneq R \subseteq \Q$}
and \mbox{$r \cdot \Z \subseteq R$} for every \mbox{$r \in R$}. Every \mbox{non-zero}
subring $R$ of $\Q$ (with or without $1$) satisfies these conditions.
\begin{lemma}\label{April2019}
There exists a \mbox{non-zero} integer \mbox{$m \in R$}.
\end{lemma}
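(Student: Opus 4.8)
The plan is to produce an explicit non-zero integer in $R$ by clearing the denominator of any non-zero element. Since we assume $\{0\} \subsetneq R$, the set $R$ contains some element $r$ with $r \neq 0$. Because $r \in R \subseteq \Q$, I would write $r = \frac{a}{b}$ for some integers $a, b$ with $b \neq 0$; the hypothesis $r \neq 0$ forces $a \neq 0$.

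The key step is to invoke the closure assumption $r \cdot \Z \subseteq R$. Applying it to the particular integer $b \in \Z$, I obtain $r \cdot b \in r \cdot \Z \subseteq R$. But $r \cdot b = \frac{a}{b} \cdot b = a$, so the integer $a$ lies in $R$. Setting $m = a$ then gives a non-zero integer belonging to $R$, which is exactly the claim.

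There is no real obstacle here: the argument is a one-line denominator-clearing computation, and the only thing to be careful about is recording that $a \neq 0$ (guaranteed by $r \neq 0$) so that the produced integer $m$ is genuinely non-zero. I do not even need to reduce $\frac{a}{b}$ to lowest terms, since any integer representation of the denominator works.
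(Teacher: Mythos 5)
Your argument is correct and is essentially identical to the paper's own proof: both take a non-zero element $\frac{a}{b} \in R$ and use the hypothesis $r \cdot \Z \subseteq R$ to conclude that $a = \frac{a}{b} \cdot b$ is a non-zero integer in $R$. Nothing is missing.
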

\begin{proof}
There exist \mbox{$m,n \in \Z \setminus \{0\}$} such that \mbox{$\frac{\textstyle m}{\textstyle n} \in R$}.
Hence,
\mbox{$m=\frac{\textstyle m}{\textstyle n} \cdot n \in (\Z \setminus \{0\}) \cap R$}.
\end{proof}
\begin{lemma}\label{4sq}
Let \mbox{$m \in (\Z \setminus \{0\}) \cap R$}. We claim that for every \mbox{$b \in R$},
\mbox{$b \neq 0$} if and only if the equation
\[
y \cdot b-m^2-\sum_{i=1}^4 y_i^2=0
\]
is solvable in \mbox{$y,y_1,y_2,y_3,y_4 \in R$}.
\end{lemma}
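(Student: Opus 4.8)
The statement is a biconditional, so the plan is to prove the two implications separately, and I expect the forward direction (from $b \neq 0$ to solvability) to carry essentially all the content. For the backward implication, suppose the equation $y \cdot b - m^2 - \sum_{i=1}^4 y_i^2 = 0$ has a solution with $y, y_1, \ldots, y_4 \in R$. Then, since squares are non-negative in the ordered field $\Q$,
\[
y \cdot b = m^2 + \sum_{i=1}^4 y_i^2 \geq m^2 > 0,
\]
where the strict inequality uses $m \neq 0$. Hence $y \cdot b \neq 0$, which forces $b \neq 0$; this direction needs nothing beyond positivity of squares.

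For the forward implication, assume $b \neq 0$ and write $b = \frac{p}{q}$ with $p, q \in \Z \setminus \{0\}$. The idea is to exhibit an explicit solution whose coordinates are visibly integer multiples of $m$ (or of $b$), so that the standing hypothesis $r \cdot \Z \subseteq R$ for every $r \in R$ guarantees membership in $R$. Concretely, I would set $y = m^2 p q$ and search for $y_i = m c_i$ with integers $c_i$. Then $y \cdot b = m^2 p^2$, while $m^2 + \sum_{i=1}^4 y_i^2 = m^2\bigl(1 + \sum_{i=1}^4 c_i^2\bigr)$, so the equation reduces to the integer identity $\sum_{i=1}^4 c_i^2 = p^2 - 1$. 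Since $p$ is a non-zero integer, $p^2 - 1$ is a non-negative integer, and Lagrange's four-square theorem supplies the required $c_1, c_2, c_3, c_4 \in \Z$.

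It remains to confirm that these values lie in $R$: because $m \in R$, the hypothesis gives $m \cdot \Z \subseteq R$, whence $y = m \cdot (m p q) \in R$ and each $y_i = m \cdot c_i \in R$. The only real obstacle is choosing the scaling on $y$ correctly: one must take $y = m^2 p q$ (rather than something smaller) precisely so that the leftover term $p^2 - 1$ is simultaneously a non-negative integer and a sum of four squares, while all five coordinates remain inside the $\Z$-module $R$. With this choice fixed, the conclusion follows by direct substitution.
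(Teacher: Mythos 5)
Your proof is correct and follows essentially the same route as the paper's: clear the denominator of $b$ with a suitable multiple of $m^2$, reduce to representing a non-negative integer as a sum of four squares via Lagrange's theorem, and scale the resulting integers by $m$ to stay inside $R$. The only difference is cosmetic: the paper normalizes $b=p/q$ with $p>0$ and takes $y=m^2 q$ (reducing to $p-1$), while you take $y=m^2 p q$ (reducing to $p^2-1$); both choices work.
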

\begin{proof}
If \mbox{$b=0$}, then for every \mbox{$y,y_1,y_2,y_3,y_4 \in R$},
\[
y \cdot b-m^2-y_1^2-y_2^2-y_3^2-y_4^2=-m^2-y_1^2-y_2^2-y_3^2-y_4^2 \leqslant -m^2<0
\]
If \mbox{$b \neq 0$}, then \mbox{$b=\frac{\textstyle p}{\textstyle q}$}, where \mbox{$p \in \N \setminus \{0\}$}
and \mbox{$q \in \Z \setminus \{0\}$}. In this case, we define $y$ as \mbox{$m^2 \cdot q$}
and observe that \mbox{$m^2 \cdot q=(m \cdot q) \cdot m \in R$} as \mbox{$m \cdot q \in R$} and \mbox{$m \in \Z$}.
Hence,
\[
y \cdot b=(m^2 \cdot q) \cdot \frac{\textstyle p}{\textstyle q}=m^2 \cdot p \in m^2 \cdot (\N \setminus \{0\})
\]
By Lagrange's \mbox{four-square} theorem, there exist \mbox{$t_1,t_2,t_3,t_4 \in \N$} such that
\[
\frac{y \cdot b-m^2}{m^2}=t_1^2+t_2^2+t_3^2+t_4^2
\]
Therefore,
\[
y \cdot b-m^2-(m \cdot t_1)^2-(m \cdot t_2)^2-(m \cdot t_3)^2-(m \cdot t_4)^2=0,
\]
where \mbox{$m \cdot t_1,~m \cdot t_2,~m \cdot t_3,~m \cdot t_4 \in R$}.
\end{proof}
\newpage
\begin{lemma}\label{trivial}
We can uniquely express every rational number $r$ as \mbox{$\widehat{~~r~~}/~\overline{r}$},
where \mbox{$\widehat{~~r~~} \in \Z$}, \mbox{$\overline{r} \in \N \setminus \{0\}$},
and the integers $\widehat{~~r~~}$ and $\overline{r}$ are relatively prime. If \mbox{$r \in R$},
then \mbox{$\widehat{~~r~~} \in R$}.
\end{lemma}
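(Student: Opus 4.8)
The plan is to treat the two assertions separately, since the existence and uniqueness of the reduced representation is a statement about $\Q$ alone, whereas the membership $\widehat{~~r~~} \in R$ is where the standing hypotheses on $R$ enter.

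First I would record the representation, which is classical. For $r=0$ I set $\widehat{~~0~~}=0$ and $\overline{0}=1$. For $r \neq 0$, I write $r=p/q$ with \mbox{$p,q \in \Z$}, \mbox{$q \neq 0$}, and divide $p$ and $q$ by $\gcd(|p|,|q|)$, choosing signs so that the denominator is positive; this yields $\widehat{~~r~~} \in \Z$ and \mbox{$\overline{r} \in \N \setminus \{0\}$} with \mbox{$\gcd(\widehat{~~r~~},\overline{r})=1$}. Uniqueness is the standard fact that a fraction in lowest terms with positive denominator is uniquely determined: if \mbox{$\widehat{~~r~~}/\overline{r}=a/b$} with \mbox{$\gcd(a,b)=1$} and \mbox{$b>0$}, then cross-multiplication together with coprimality forces \mbox{$\overline{r} \mid b$} and \mbox{$b \mid \overline{r}$}, hence \mbox{$\overline{r}=b$} and then \mbox{$\widehat{~~r~~}=a$}. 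I would present this as routine elementary number theory and not dwell on it.

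For the second assertion, the key observation is simply that $\overline{r}$ is a positive integer, hence lies in $\Z$. Therefore, if \mbox{$r \in R$}, the standing hypothesis \mbox{$r \cdot \Z \subseteq R$} applies with the integer $\overline{r}$, and since \mbox{$\widehat{~~r~~}=r \cdot \overline{r}$} I obtain
\[
\widehat{~~r~~}=r \cdot \overline{r} \in r \cdot \Z \subseteq R.
\]
This is the entire content of the nontrivial direction, and notably it uses no coprimality, only that the denominator is an integer.

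I do not expect any real obstacle; the lemma is labelled ``trivial'' precisely because the reduced-fraction representation is classical and the membership claim collapses to a one-line application of \mbox{$r \cdot \Z \subseteq R$}. The only point deserving a moment of care is to view \mbox{$\widehat{~~r~~}=r \cdot \overline{r}$} as an element of the set $r \cdot \Z$ rather than to invoke closure of $R$ under multiplication, which matters because $R$ need not contain $1$ and need not be a ring at all under the weaker hypotheses in force.
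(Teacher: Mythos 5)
Your proposal is correct and matches the paper's proof, which consists precisely of the one line $\widehat{~~r~~}=r \cdot \overline{r} \in r \cdot \Z \subseteq R$; the paper leaves the classical existence and uniqueness of the reduced representation unproved, as you anticipated. Your remark that one must use $r \cdot \Z \subseteq R$ rather than ring closure (since $R$ may lack $1$) is exactly the right point of care.
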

\begin{proof}
For every \mbox{$r \in R$}, \mbox{$\widehat{~~r~~}=r \cdot \overline{r} \in r \cdot \Z \subseteq R$}.
\end{proof}
\begin{lemma}\label{apr23}
Let $R$ be a \mbox{non-zero} subring of $\Q$ with or without $1$.
We claim that for every \mbox{$T_0,\ldots,T_k \in R^n$} and for every \mbox{$x_1,\ldots,x_n \in R$},
the following product
\begin{equation}\label{equ1}
\prod_{\textstyle \left(r_{1},\ldots,r_{n}\right) \in \left\{T_0,\ldots,T_k\right\}}\ 
\sum_{i=1}^{n} \left(x_{i} \cdot \overline{r_i} -\widehat{~~r_i~~}\right)^{2}
\end{equation}
differs from $0$ if and only if \mbox{$(x_1,\ldots,x_n) \not\in \{T_0,\ldots,T_k\}$}.
Product~(\ref{equ1}) belongs to $R$.
\end{lemma}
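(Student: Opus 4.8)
The plan is to treat the two assertions separately, both by elementary manipulation. For the equivalence, I would first analyze a single inner factor. Each sum $\sum_{i=1}^{n}\left(x_i \cdot \overline{r_i}-\widehat{~~r_i~~}\right)^2$ is a sum of squares of rational numbers, hence a nonnegative rational, and it vanishes if and only if every summand vanishes. Since $\overline{r_i} \in \N \setminus \{0\}$, the $i$-th summand is $0$ exactly when $x_i \cdot \overline{r_i}=\widehat{~~r_i~~}$, that is, when $x_i=\widehat{~~r_i~~}/\overline{r_i}=r_i$ in view of Lemma~\ref{trivial}. Consequently a given inner sum equals $0$ precisely when $(x_1,\ldots,x_n)=(r_1,\ldots,r_n)$.

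Next I would assemble the factors. Product~(\ref{equ1}) is a product of nonnegative rationals, so it is $0$ if and only if at least one of its factors is $0$; by the previous step this happens if and only if $(x_1,\ldots,x_n)$ coincides with one of the tuples $T_0,\ldots,T_k$. Negating this statement yields exactly the claim that Product~(\ref{equ1}) differs from $0$ if and only if $(x_1,\ldots,x_n) \not\in \{T_0,\ldots,T_k\}$.

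For the membership assertion I would track the ring operations factor by factor. Fix a tuple $(r_1,\ldots,r_n) \in \{T_0,\ldots,T_k\} \subseteq R^n$. For each $i$ we have $x_i \in R$ and $\overline{r_i} \in \N \setminus \{0\} \subseteq \Z$, so $x_i \cdot \overline{r_i} \in x_i \cdot \Z \subseteq R$ because the additive group of $R$ is closed under integer multiples; moreover $r_i \in R$ gives $\widehat{~~r_i~~} \in R$ by Lemma~\ref{trivial}. As $R$ is closed under subtraction, $x_i \cdot \overline{r_i}-\widehat{~~r_i~~} \in R$, and as $R$ is closed under multiplication its square lies in $R$ as well. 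Summing over $i$ keeps the result in $R$, and finally forming the product over the tuples $T_0,\ldots,T_k$ does too, whence Product~(\ref{equ1}) belongs to $R$.

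The argument is entirely routine, and I expect no genuine obstacle; the only points that require attention are invoking Lemma~\ref{trivial} to place each $\widehat{~~r_i~~}$ in $R$, and using closure under integer multiples (rather than the false inclusion $\Z \subseteq R$, which breaks for rings such as $2\Z$) to place each $x_i \cdot \overline{r_i}$ in $R$.
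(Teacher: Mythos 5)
Your proof is correct and follows essentially the same route as the paper, whose entire proof consists of the remark that the membership claim follows from Lemma~\ref{trivial} (the equivalence being left as routine). You simply spell out the details the paper omits, including the one point that genuinely needs care — using $x_i\cdot\Z\subseteq x_i\cdot\overline{r_i}\cdot\Z\cup x_i\cdot\Z\subseteq R$-type closure under integer multiples rather than the false inclusion $\Z\subseteq R$ — exactly as the paper's standing hypothesis $r\cdot\Z\subseteq R$ intends.
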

\begin{proof}
The last claim follows from Lemma~\ref{trivial}.
\end{proof}
\begin{lemma}\label{3maj}
Let $R$ be a \mbox{non-zero} subring of $\Q$ (with or without $1$) such that
there exists an algorithm which for every \mbox{$(a,b) \in \Z \times (\Z \setminus \{0\})$}
decides whether or not \mbox{$\frac{\textstyle a}{\textstyle b} \in R$}.
Let \mbox{$\rho_n \colon {\Q}^n \to R^n$} denote the function
which equals the identity on \mbox{$R^n$} and equals \mbox{$(0,\ldots,0)$} outside \mbox{$R^n$}.
We claim that for every positive integer $n$ the function \mbox{$\rho_n \circ s_n \colon \N \to R^n$}
is surjective and computable.
\end{lemma}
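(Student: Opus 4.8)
The plan is to verify the two assertions separately, treating $\rho_n \circ s_n$ as the composition of the computable surjection $s_n$ from Lemma~\ref{lem0} with the coordinatewise membership test supplied by the hypothesis. First I would record a small preliminary: since $R$ is a non-zero subring of $\Q$, it is an additive subgroup and hence contains $0$, so $(0,\ldots,0) \in R^n$. This confirms that the value $\rho_n$ assigns to points outside $R^n$ actually lies in $R^n$, and therefore that $R^n$ is the correct codomain.

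For surjectivity, I would fix an arbitrary $(x_1,\ldots,x_n) \in R^n$. Because $s_n$ maps $\N$ onto $\Q^n$ and $R^n \subseteq \Q^n$, there is some $m \in \N$ with $s_n(m) = (x_1,\ldots,x_n)$. Since this point lies in $R^n$, the function $\rho_n$ acts as the identity on it, so $(\rho_n \circ s_n)(m) = (x_1,\ldots,x_n)$. Hence every element of $R^n$ is attained, and $\rho_n \circ s_n$ is surjective.

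For computability, given $m \in \N$ I would first compute $s_n(m) = (y_1,\ldots,y_n) \in \Q^n$, which is possible because $s_n$ is computable. Each coordinate is returned as an explicit fraction $y_i = a_i/b_i$ with $a_i \in \Z$ and $b_i \in \Z \setminus \{0\}$, so the decision algorithm furnished by the hypothesis can be run on each pair $(a_i,b_i)$ to determine whether $y_i \in R$. One then has $(y_1,\ldots,y_n) \in R^n$ precisely when all $n$ tests succeed; in that case the procedure outputs $(y_1,\ldots,y_n)$, and otherwise it outputs $(0,\ldots,0)$. This output is by definition $(\rho_n \circ s_n)(m)$, and since every step halts on every input, the function is computable.

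I expect no serious obstacle. The only point requiring a word of care is that the rational coordinates produced by $s_n$ must be presented in a form on which the given membership algorithm acts; this is guaranteed, since $s_n$ is computable and therefore outputs each coordinate as an explicit integer numerator together with a nonzero integer denominator.
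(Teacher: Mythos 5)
Your proof is correct and is exactly the routine argument the authors evidently had in mind: the paper states Lemma~\ref{3maj} without proof, treating it as immediate from the surjectivity and computability of $s_n$ (Lemma~\ref{lem0}) together with the assumed decision procedure for membership of $\frac{a}{b}$ in $R$. Your added remarks --- that $0\in R$ so $(0,\ldots,0)\in R^n$, and that $s_n$ outputs each coordinate as an explicit numerator--denominator pair on which the membership algorithm can act --- are the right details to make the omitted proof fully explicit.
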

\begin{theorem}\label{the6}
Let $R$ be a \mbox{non-zero} subring of $\Q$ (with or without $1$) such that
Hilbert's 10th Problem for solutions in $R$ has a positive solution. We claim
that the set of all Diophantine equations with a finite number of solutions
in~$R$ is recursively enumerable.
\end{theorem}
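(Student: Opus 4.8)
The plan is to convert the decision procedure supplied by the positive solution to $H_{10}(R)$ into an acceptor (a semi-decision procedure) for the property ``$D=0$ has only finitely many solutions in $R$''. The first observation is that a positive solution to $H_{10}(R)$ already decides membership in $R$: for $(a,b) \in \Z \times (\Z \setminus \{0\})$ the equation $b \cdot x - a = 0$ has a solution in $R$ if and only if $\frac{a}{b} \in R$, so feeding this equation to the $H_{10}(R)$ decision procedure decides $\frac{a}{b} \in R$. Consequently the hypothesis of Lemma~\ref{3maj} is satisfied, and for each positive integer $n$ we obtain a computable surjection $\rho_n \circ s_n \colon \N \to R^n$. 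By the same membership test we can effectively produce a fixed non-zero integer $m \in R$, whose existence is guaranteed by Lemma~\ref{April2019}, by testing $1, -1, 2, -2, \ldots$ for membership in $R$ until one succeeds. Since $R$ is a non-zero subring of $\Q$, Lemmas~\ref{April2019},~\ref{4sq},~\ref{trivial}, and~\ref{apr23} all apply.

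The heart of the argument is to encode ``$D=0$ has a solution in $R^n$ lying outside a prescribed finite set'' as the solvability of a single Diophantine equation with integer coefficients. Fix a finite set $S = \{T_0, \ldots, T_k\} \subseteq R^n$ and let
\[
P(x_1,\ldots,x_n) = \prod_{(r_1,\ldots,r_n) \in \{T_0,\ldots,T_k\}} \sum_{i=1}^n \left(x_i \cdot \overline{r_i} - \widehat{~~r_i~~}\right)^2
\]
be product~(\ref{equ1}). By Lemma~\ref{apr23}, $P$ has integer coefficients, takes values in $R$ on $R^n$, and satisfies $P(x_1,\ldots,x_n) \neq 0$ if and only if $(x_1,\ldots,x_n) \notin S$. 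Applying Lemma~\ref{4sq} with $b = P(x_1,\ldots,x_n)$, the condition $P(x_1,\ldots,x_n) \neq 0$ is equivalent to the solvability in $y, y_1, y_2, y_3, y_4 \in R$ of $y \cdot P - m^2 - \sum_{i=1}^4 y_i^2 = 0$. Hence the single equation
\[
D(x_1,\ldots,x_n)^2 + \left(y \cdot P(x_1,\ldots,x_n) - m^2 - \sum_{i=1}^4 y_i^2\right)^2 = 0
\]
has a solution in $R$ if and only if $D=0$ has a solution $(x_1,\ldots,x_n) \in R^n$ with $(x_1,\ldots,x_n) \notin S$. All coefficients are integers, so this equation is a legitimate input to the $H_{10}(R)$ oracle.

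The acceptor then enumerates finite subsets $S$ of $R^n$ in an effective order: it enumerates finite subsets of $\N$ and maps each componentwise through $\rho_n \circ s_n$, so that surjectivity guarantees every finite $S \subseteq R^n$ is eventually produced. For each $S$ in turn it forms the equation displayed above and queries the $H_{10}(R)$ oracle; since every such query terminates, this sequential search is well defined, and the acceptor halts as soon as some $S$ yields an \emph{unsolvable} equation. If $D=0$ has finitely many solutions, take $S$ to be the finite solution set itself: then no solution lies outside $S$, the associated equation is unsolvable, and the acceptor halts. If $D=0$ has infinitely many solutions, then every finite $S$ omits some solution, every associated equation is solvable, and the acceptor never halts. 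Thus the procedure accepts exactly the equations with finitely many solutions in $R$, which shows that this set is recursively enumerable.

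The step I expect to require the most care is the correctness of the encoding in the middle paragraph, namely verifying that Lemmas~\ref{apr23} and~\ref{4sq} compose so that the combined equation's solvability in $R$ matches precisely ``$D=0$ has a solution outside $S$'', together with confirming that the resulting polynomial genuinely has integer coefficients (this relies on $\widehat{~~r_i~~}, \overline{r_i} \in \Z$ by Lemma~\ref{trivial} and on $m \in \Z$). The enumeration and the oracle calls are then routine, and the only remaining point is the elementary equivalence ``$D=0$ has finitely many solutions in $R$ if and only if some finite $S \subseteq R^n$ contains all of them.''
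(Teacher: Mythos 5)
Your proposal is correct and follows essentially the same route as the paper: use the $H_{10}(R)$ oracle to decide membership of rationals in $R$ (via $b\cdot x-a=0$), obtain the computable surjection onto $R^n$ from Lemma~\ref{3maj}, and combine Lemmas~\ref{4sq} and~\ref{apr23} to encode ``$D=0$ has a solution in $R^n$ outside a given finite set'' as a single integer-coefficient equation queried against the oracle, halting exactly when the finite set captures all solutions. The only cosmetic differences are that you enumerate arbitrary finite subsets of $R^n$ where the paper uses the increasing prefixes $\{\theta(0),\ldots,\theta(k)\}$ of its surjection, and you locate $m\in(\Z\setminus\{0\})\cap R$ by testing integers directly rather than scanning the surjection's output; both variants are sound.
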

\begin{proof}
By Lemma~\ref{April2019}, there exists a \mbox{non-zero} integer \mbox{$m \in R$}.
For every \mbox{$(a,b) \in \Z \times (\Z \setminus \{0\})$}, the solvability in $R$
of the equation \mbox{$b \cdot x-a=0$} is decidable. Hence,
for every \mbox{$(a,b) \in \Z \times (\Z \setminus \{0\})$} we can decide whether
or not \mbox{$\frac{\textstyle a}{\textstyle b} \in R$}.
By \mbox{Lemmas~\ref{4sq} and~\ref{apr23}}, the answer to the question in Flowchart~1 is positive
if and only if the equation \mbox{$D(x_1,\ldots,x_n)=0$} is solvable in \mbox{$R^n \setminus \{\theta(0),\ldots,\theta(k)\}$}.
Hence, by Lemma~\ref{3maj}, the algorithm in Flowchart~1 halts if and only if the equation \mbox{$D(x_1,\ldots,x_n)=0$}
has at most finitely many solutions in $R$.
\begin{center}
\includegraphics[width=\textwidth]{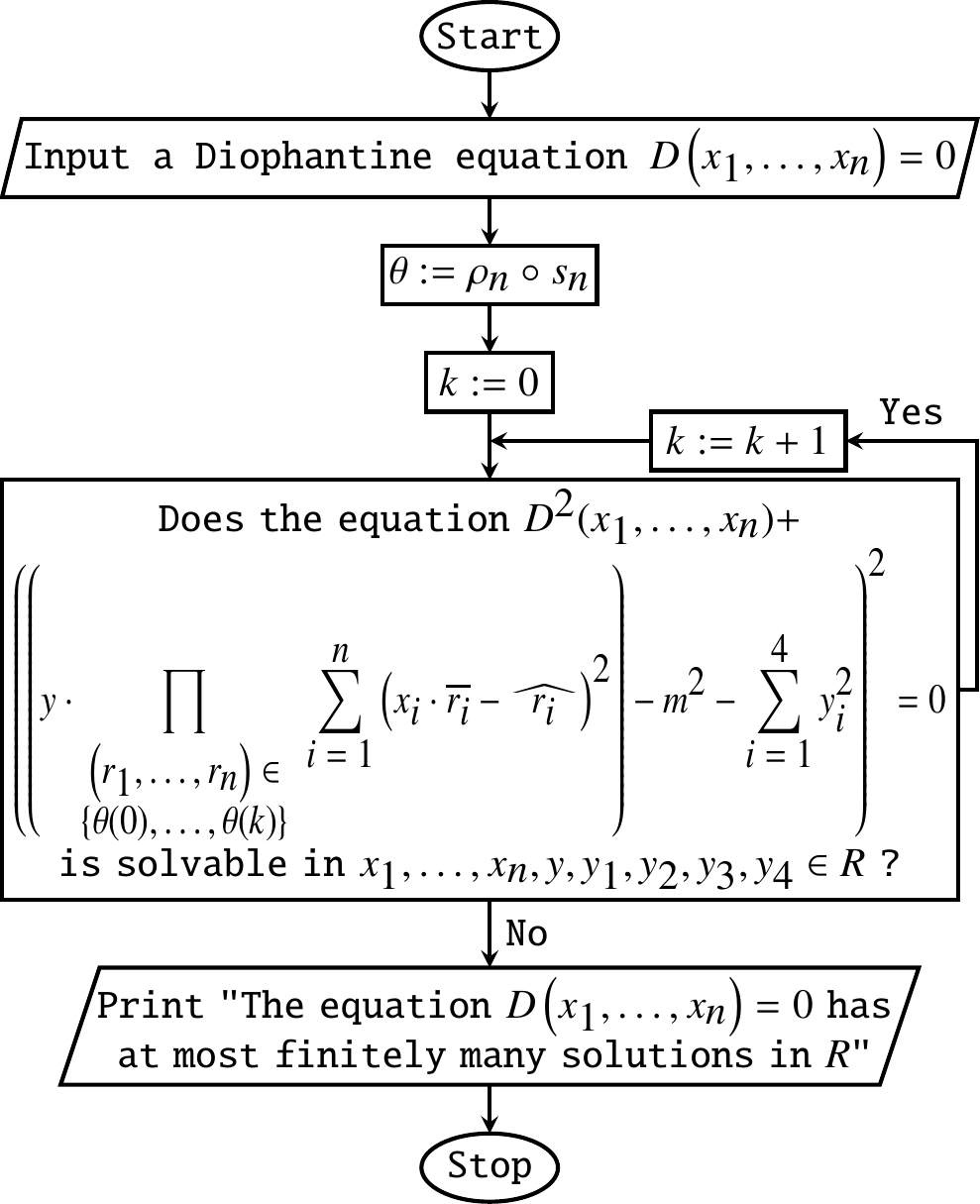}
\end{center}
\vskip 0.01truecm
\centerline{{\bf Flowchart 1}}
\end{proof}
\par
Theorem~\ref{the6} remains true when \mbox{$R=\{0\}$}.
The flowchart algorithm depends on \mbox{$m \in (\Z \setminus \{0\}) \cap R$}.
For a constructive proof of Theorem~\ref{the6}, we must compute an element of \mbox{$(\Z \setminus \{0\}) \cap R$}.
By Lemma~\ref{3maj}, the function \mbox{$\rho_n \circ s_n \colon \N \to R^n$} is computable and surjective.
We compute the smallest \mbox{$i \in \N$} such that \mbox{$(\rho_n \circ s_n)(i)$}
starts with a \mbox{non-zero} integer. This integer belongs to \mbox{$(\Z \setminus \{0\}) \cap R$}.
\section{If the set of all Diophantine equations with a finite
number of solutions in $R$ is recursively enumerable, then $H_{10}(R)$ has a positive solution}
Starting from this moment up to the end of Theorem~\ref{the5}, we assume that $R$ is an infinite subset
of~$\Q$ and there exist computable functions \mbox{$\tau_1,\tau_2 \colon \N \to \Z$} which satisfy
\[
(\forall n \in \N ~\tau_2(n) \neq 0) \wedge \left(\left\{\frac{\textstyle \tau_1(n)}{\textstyle \tau_2(n)}:~n \in \N\right\}=R\right)
\]
In other words, the function
$\N \ni n \stackrel{\textstyle \tau}{\longrightarrow} \frac{\textstyle \tau_1(n)}{\textstyle \tau_2(n)} \in R$
is surjective and computable. Hence, the function \mbox{$(\tau,\ldots,\tau): {\N}^n \to R^n$} is surjective
and computable.
\begin{lemma}\label{may6}
Let \mbox{$\sigma_n \colon {\Q}^n \to {\N}^n$} denote the function which equals the
identity on \mbox{${\N}^n$} and equals \mbox{$(0,\ldots,0)$} outside \mbox{${\N}^n$}.
We claim that for every positive integer $n$ the function
\mbox{$(\tau,\ldots,\tau) \circ \sigma_n \circ s_n \colon \N \to R^n$}
is surjective and computable.
\end{lemma}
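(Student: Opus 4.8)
The plan is to verify computability and surjectivity separately for the composition $(\tau,\ldots,\tau) \circ \sigma_n \circ s_n$, regarding it as the chain that sends $k \in \N$ first to $s_n(k) \in \Q^n$, then to $\sigma_n(s_n(k)) \in \N^n$, and finally to an element of $R^n$.

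For computability, first I would note that $s_n$ is computable by Lemma~\ref{lem0}. Next, $\sigma_n$ is computable: presenting each coordinate of a point of $\Q^n$ as a reduced fraction, one can decide coordinate-wise whether that coordinate is a \mbox{non-negative} integer, and hence decide whether the whole tuple lies in $\N^n$; the function $\sigma_n$ then returns the tuple itself or $(0,\ldots,0)$ accordingly. Finally, $(\tau,\ldots,\tau)$ is computable because $\tau(n)=\tau_1(n)/\tau_2(n)$ is computable, as $\tau_1,\tau_2$ are computable and $\tau_2(n)\neq 0$. Since a composition of computable functions is computable, $(\tau,\ldots,\tau)\circ\sigma_n\circ s_n$ is computable.

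For surjectivity, the key observation is that $\sigma_n\circ s_n\colon\N\to\N^n$ is already surjective onto $\N^n$. Indeed, given any $v\in\N^n\subseteq\Q^n$, surjectivity of $s_n$ onto $\Q^n$ (Lemma~\ref{lem0}) yields some $k$ with $s_n(k)=v$, and since $v\in\N^n$ we have $\sigma_n(s_n(k))=v$; the collapsing of points outside $\N^n$ to $(0,\ldots,0)$ does no harm. Because $\tau$ is surjective onto $R$, the map $(\tau,\ldots,\tau)$ is surjective onto $R^n$, so the full composition is a composition of surjections and is therefore surjective onto $R^n$.

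I expect no serious obstacle here; the only point requiring care is the surjectivity step, where one must use that $s_n$ already hits every element of $\N^n$ and that $\sigma_n$ fixes $\N^n$ pointwise, so that passing through $\sigma_n$ does not destroy the surjectivity that $(\tau,\ldots,\tau)$ then lifts from $\N^n$ onto $R^n$. This lemma parallels Lemma~\ref{3maj} but routes through $\N^n$, where membership is trivially decidable, instead of through $R^n$; this lets us rely on the computable parametrization $\tau$ of $R$ rather than on decidability of membership in $R$, which is precisely why the weaker hypotheses of this section suffice.
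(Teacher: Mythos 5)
Your proposal is correct and takes the natural route: the paper itself states Lemma~\ref{may6} without proof, treating exactly this routine verification (computability of each factor, surjectivity of $s_n$ onto $\Q^n\supseteq\N^n$, $\sigma_n$ fixing $\N^n$ pointwise, and $\tau$ surjecting onto $R$) as evident. Your closing remark correctly identifies why this lemma, unlike Lemma~\ref{3maj}, needs only the computable parametrization $\tau$ of $R$ and not decidability of membership in $R$.
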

\begin{theorem}\label{the5}
If the set of all Diophantine equations which have at most finitely many solutions in $R$
is recursively enumerable, then there exists an algorithm which decides whether or not a given Diophantine
equation has a solution in $R$.
\end{theorem}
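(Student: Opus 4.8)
The plan is to combine two complementary semi-decision procedures, running in parallel, so that for any input Diophantine equation \mbox{$D(x_1,\ldots,x_n)=0$} exactly one of them terminates, thereby deciding solvability in $R$.

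First I would set up the positive search procedure. By Lemma~\ref{may6}, the function \mbox{$(\tau,\ldots,\tau) \circ \sigma_n \circ s_n \colon \N \to R^n$} is a computable surjection onto \mbox{$R^n$}. So I would enumerate \mbox{$R^n$} through this map, and for each produced tuple \mbox{$(x_1,\ldots,x_n) \in R^n$} evaluate \mbox{$D(x_1,\ldots,x_n)$} and test whether it equals $0$. If \mbox{$D=0$} has a solution in $R$, this search eventually outputs such a tuple and halts with the answer ``solvable''; if \mbox{$D=0$} has no solution in $R$, the search runs forever.

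Next I would set up the negative procedure, which detects unsolvability using the hypothesis. By assumption, the set of all Diophantine equations with at most finitely many solutions in $R$ is recursively enumerable, so I can run an enumeration of this set and wait until the padded equation \mbox{$D(x_1,\ldots,x_n)+0 \cdot x_{n+1}=0$} appears in the list. By Lemma~\ref{lem1}, this padded equation has at most finitely many solutions in $R$ precisely when \mbox{$D=0$} has no solution in $R$. Thus, if \mbox{$D=0$} is unsolvable in $R$, the padded equation belongs to the enumerated set and is eventually listed, so this procedure halts with the answer ``unsolvable''; if \mbox{$D=0$} has a solution in $R$, then, because $R$ is infinite and \mbox{$x_{n+1}$} may take any value in $R$, the padded equation has infinitely many solutions in $R$, so it never appears in the enumeration and this procedure runs forever.

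Finally I would dovetail the two procedures. The two cases above show that exactly one of them halts: the search halts if and only if \mbox{$D=0$} is solvable in $R$, and the enumeration-based procedure halts if and only if \mbox{$D=0$} is unsolvable in $R$. Whichever halts first determines the correct answer, which yields the required decision algorithm. The only delicate point---and the crux of the argument---is the dichotomy guaranteeing that exactly one branch terminates; it rests entirely on Lemma~\ref{lem1} together with the infinitude of $R$, which converts ``has a solution'' into ``has infinitely many solutions'' for the padded equation. No genuine obstacle remains beyond assembling these two lemmas into the parallel scheme.
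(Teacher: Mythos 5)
Your proposal is correct and is essentially the paper's own argument: the paper's Flowchart~2 performs exactly your dovetailing, checking at stage $i$ whether $D(\varphi(i))=0$ for the computable surjection $\varphi\colon\N\to R^n$ from Lemma~\ref{may6}, or whether the padded equation $D(x_1,\ldots,x_n)+0\cdot x_{n+1}=0$ equals ${\cal S}_i$ in the enumeration, with Lemma~\ref{lem1} supplying the same dichotomy you identify as the crux. No substantive difference.
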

\begin{proof}
Suppose that \mbox{$\{{\cal S}_i=0\}_{i=0}^\infty$} is a computable sequence of all Diophantine equations which have
at most finitely many solutions in~$R$. By Lemma~\ref{lem1},
the execution of Flowchart 2 decides whether or not a Diophantine equation \mbox{$D(x_1,\ldots,x_n)=0$}
has a solution in~$R$.
The flowchart algorithm uses a computable surjection \mbox{$\varphi \colon \N \to R^n$}
(which exists by Lemma~\ref{may6}).
\begin{center}
\includegraphics[width=\textwidth]{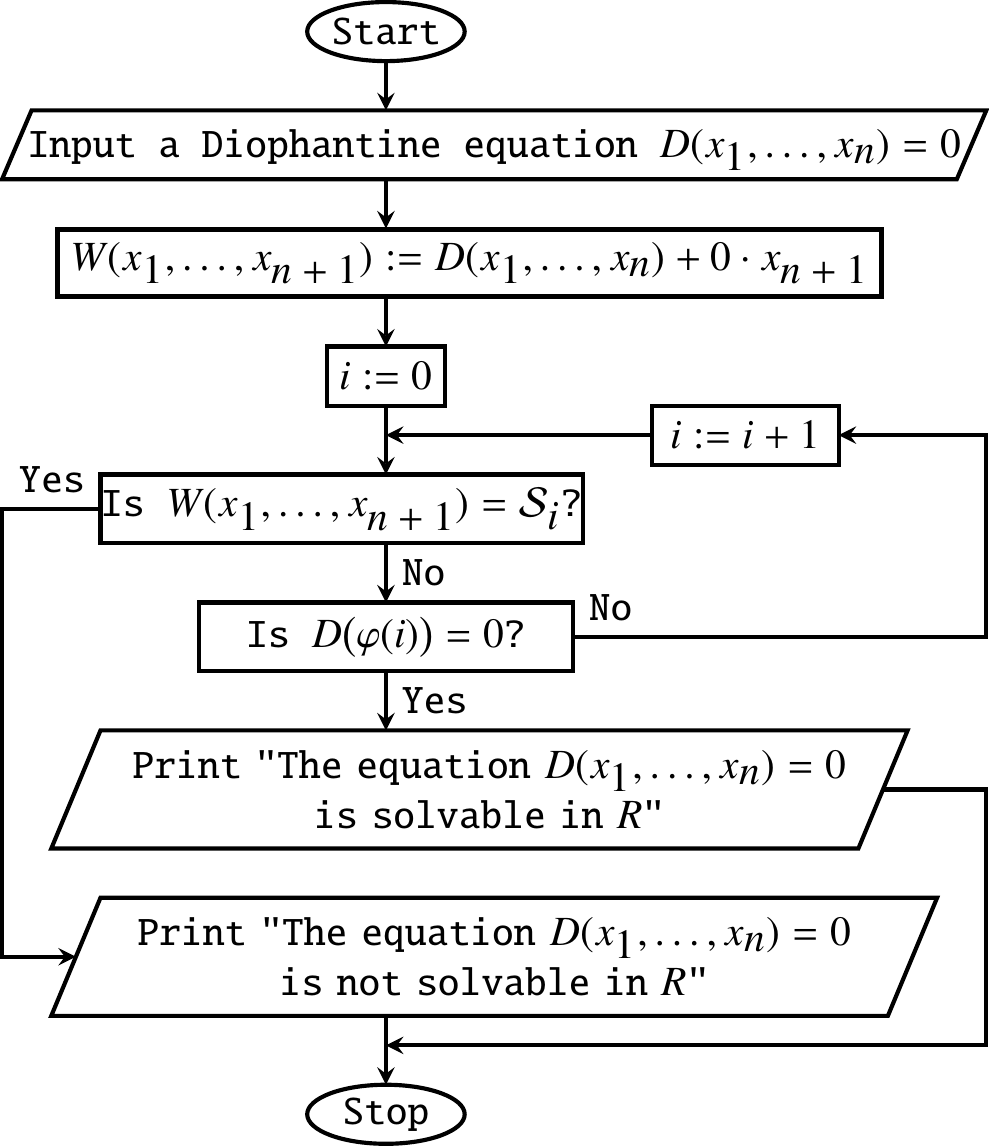}
\end{center}
\vskip 0.01truecm
\centerline{{\bf Flowchart 2}}
\vskip 0.2truecm
\noindent
The flowchart algorithm always terminates because there exists a \mbox{non-negative} integer $i$
such that
\[
(D(x_1,\ldots,x_n)+0 \cdot x_{n+1}={\cal S}_i) \vee (D(\varphi(i))=0)
\]
Indeed, for every Diophantine equation \mbox{$D(x_1,\ldots,x_n)=0$},
the flowchart algorithm finds a solution in $R$,
or finds the equation \mbox{$D(x_1,\ldots,x_n)+0 \cdot x_{n+1}=0$} on the infinite
list \mbox{$[{\cal S}_0,{\cal S}_1,{\cal S}_2,\ldots]$} if the equation \mbox{$D(x_1,\ldots,x_n)=0$}
is not solvable in $R$.
\end{proof}
\vskip 0.2truecm
\noindent
{\bf Corollary.} {\em Theorem~\ref{the5} for \mbox{$R=\N$} implies that
Craig Smory\'nski's theorem follows from Yuri Matiyasevich's theorem.}
\newpage
Harvey Friedman conjectures that the set of all polynomials of several variables with integer
coefficients that have a rational solution is not recursive, see \cite{Friedman}.
Harvey Friedman conjectures that the set of all polynomials of several variables with integer coefficients
that have only finitely many rational solutions is not recursively enumerable, see \cite{Friedman}.
These conjectures are equivalent by Theorems~\ref{the6} and~\ref{the5} taking \mbox{$R=\Q$}.
\vskip 0.2truecm
\noindent
{\bf Acknowledgement.} Agnieszka Peszek prepared two flowcharts in {\sl TikZ}. Apoloniusz Tyszka wrote the article.

\vskip 0.01truecm
\noindent
Agnieszka Peszek\\
University of Agriculture\\
Faculty of Production and Power Engineering\\
Balicka 116B, 30-149 Krak\'ow, Poland\\
E-mail: \url{Agnieszka.Peszek@urk.edu.pl}
\vskip 0.2truecm
\noindent
Apoloniusz Tyszka\\
University of Agriculture\\
Faculty of Production and Power Engineering\\
Balicka 116B, 30-149 Krak\'ow, Poland\\
E-mail: \url{rttyszka@cyf-kr.edu.pl}
\end{sloppypar}
\end{document}